\tikzset{join/.code=\tikzset{after node path={%
\ifx\tikzchainprevious\pgfutil@empty\else(\tikzchainprevious)%
edge[every join]#1(\tikzchaincurrent)\fi}}}
\tikzset{>=stealth',every on chain/.append style={join},
         every join/.style={->}}
\newtheorem{definition}{Definition}[section]
\newtheorem{theorem}[definition]{Theorem}
\newtheorem{lemma}[definition]{Lemma}
\newtheorem{corollary}[definition]{Corollary}
\newtheorem{proposition}[definition]{Proposition}
\theoremstyle{definition}
\newtheorem{remark}[definition]{Remark}
\newtheorem{example}[definition]{Example}
\newtheorem{notation}[definition]{Notation}
\newcommand{\clA}{\mathcal{A}}
\newcommand{\clB}{\mathcal{B}}
\newcommand{\clC}{\mathcal{C}}
\newcommand{\clH}{\mathcal{H}}
\newcommand{\clK}{\mathcal{K}}
\newcommand{\clI}{\mathcal{I}}
\newcommand{\clU}{\mathcal{U}}
\newcommand{\bh}{\mathcal{B}(\mathcal{H})}
\newcommand{\bk}{\mathcal{B}(\mathcal{K})}
\newcommand{\omax}{\otimes_{{\rm max}}}
\newcommand{\omin}{\otimes_{{\rm min}}}
\newcommand{\oc}{\otimes_{{\rm c}}}
\newcommand{\cstf}{\text{C}^*(\mathbb{F}_\infty)}
\newcommand{\cstu}{\text{C}^*_u}
\newcommand{\fn}{\mathbb{F}_n}
\newcommand{\A}{\mathcal{A}}
\newcommand{\B}{\mathcal{B}}
\newcommand{\C}{\mathcal{C}}
\newcommand{\K}{\mathcal{K}}
\newcommand{\osr}{\mathcal{R}}
\newcommand{\oss}{\mathcal{S}}
\newcommand{\ost}{\mathcal{T}}
\newcommand{\osw}{\mathcal{W}}
\newcommand\cstar{{\rm C}^*}
\newcommand\cstaru{{\rm C}_{\rm u}^*}
\newcommand\fni{{\mathbb F}_{\infty}}
\newcommand\coisubset{\subset_{\rm coi}}
\begin{document}

\title[Relative Weak Injectivity of Operator System Pairs]{Relative Weak Injectivity of Operator System Pairs}

\author[A.~Bhattacharya]{Angshuman Bhattacharya}

\address{Department of Mathematics and Statistics, University of Regina, Canada S4S 0A2}
\email{bhattaca@uregina.ca}

\keywords{operator system, commuting tensor product, relative weak injectivity}
\subjclass[2010]{Primary 46L07; Secondary 46L06, 47L05}

\begin{abstract}
The concept of a relatively weakly injective pair of operator systems
is introduced and studied in this paper, motivated by
relative weak injectivity in the C*-algebra
category.  E. Kirchberg \cite{Kr} proved that the C$^*$-algebra $\cstar(\fni)$ of the
free group $\fni$ on countably many generators characterises relative weak injectivity
for pairs of C$^*$-algebras by means of the maximal tensor product. One of the main
results of this paper shows that $\cstar(\fni)$ also characterises relative weak injectivity
in the operator system category. A key tool is the theory of operator system tensor
products \cite{KP1,KP2}.
\end{abstract}

\maketitle


\section{Introduction}

A pair $(\A, \B)$ of unital C$^*$-algebras is a \emph{relatively weakly injective pair} for every unital C$^*$-algebra $\C$,
$\A\omax\C$ is a unital C$^*$-subalgebra of $\B\omax\C$. (In particular, one has that $\A$ is a unital C$^*$-subalgebra of $\B$.)
It is common to say that \emph{$\A$ is relatively weakly injective in $\B$} if the pair $(\A,\B)$ is a relatively weakly injective pair.
Relative weak injectivity for pairs of C$^*$-algebras was introduced by E.~Kirchberg \cite{Kr} and was motivated by the work of E.C.~Lance \cite{L} on the weak expectation property for C$^*$-algebras.

The purpose of this paper is to introduce and study a notion of relative weak injectivity for pairs $(\oss,\ost)$ of operator systems $\oss$ and $\ost$. To do so, one therefore needs to consider
operator system tensor products. Although the theory of tensor products \cite{KP1,KP2}  in the category $\mathcal O_1$, whose objects are operator systems and whose morphisms are unital completely positive (ucp) linear maps,
shares many similarities with C$^*$-algebraic tensor products, there some significant differences, particularly when considering the operator system analogue of the maximal C$^*$-algebraic tensor product, $\omax$.
With the max tensor product, there are two distinct tensor products (denoted by $\oc$ and $\omax$) in the category $\mathcal O_1$ that collapse to the maximal C$^*$-algebraic tensor product on the subcategory of
unital C$^*$-algebras and unital $*$-homomorphisms. In this paper an operator system analogue of relative weak injectivity will be developed using the commuting tensor product, $\oc$. Specifically,
a pair $(\oss,\ost)$ of operator systems is said to be a \emph{relatively weakly injective pair} if, for every operator system $\osr$,
$\oss\oc\osr$ is a unital operator subsystem of $\ost\oc\osr$.

The C$^*$-algebra $\cstar(\fni)$ of the free group $\fni$ on countably infinitely many generators is universal in the sense that every unital separable C$^*$-algebra is a quotient of $\cstar(\fni)$.
Therefore, it is striking that the C$^*$-algebra $\cstar(\fni)$ can be used to characterise both the weak expectation property and relative weak injectivity, as demonstrated by two important theorems of Kirchberg.
More precisely, $\A$ has WEP if and only if
$\A\omin\cstar(\fni)=\A\omax\cstar(\fni)$ \cite[Proposition 1.1]{Kr}, and
$(\A,\B)$ is a relatively weakly injective pair if and only if $\A\omax\cstar(\fni)\subset\B\omax\cstar(\fni)$ \cite[Proposition 3.1]{Kr}.

An operator system analogue of the weak expectation property for C$^*$-algebras--namely the double commutant expectation property--was introduced and
studied in \cite{K2,KP2}, and it was shown that $\cstar(\fni)$
characterises this property.
One of the main results of this paper shows that $\cstar(\fni)$ also characterises relative weak injectivity of operator system pairs
(Theorem \ref{rwi}). In addition to establishing some alternate characterisations of relative weak injectivity,
the existence of relatively weakly injective pairs $(\oss,\ost)$ in the operator system category will be achieved (in Theorem \ref{rwiex})
in a manner similar to Kirchberg's result \cite[Corollary 3.5]{Kr} that every unital separable C$^*$-algebra is a unital C$^*$-subalgebra of a unital separable C$^*$-algebra
with the weak expectation property.
The paper concludes with a selection of examples.

The theory of operator algebraic tensor products is treated in the books \cite{BO,Tak}, while operator system tensors products are
developed in the papers \cite{KP1,KP2}. Standard references for operator systems and completely positive maps are \cite{Pl,Pi}.

\section{The Commuting Operator System Tensor Product}

If $\oss$ and $\ost$ are operator systems, then the notation $\oss \subset \ost$ means
that $\oss$ is a unital operator subsystem of $\ost$. That is, if $1_\oss$ and $1_\ost$ denote the distinguished Archimedean order units for $\oss$ and $\ost$ respectively, then $1_\oss=1_\ost$.
Unless the context is not clear, the order unit for an operator system will be denoted simply by $1$.

The algebraic tensor product $\oss\otimes \ost$ of operator systems $\oss$ and $\ost$ is a $*$-vector space.
An \emph{operator system tensor product structure} on $\oss\otimes \ost$  is a family $\tau=\{\clC_n\}_{n\in \mathbb{N}}$ of cones $\clC_n\subset M_n(\oss\otimes\ost)$ such that:
\begin{enumerate}
\item $(\oss\otimes \ost, \tau, 1_\oss\otimes 1_\ost)$ is an operator system, denoted by $\oss\otimes_\tau\ost$, in which $1_\oss\otimes 1_\ost$ is an Archimedean order unit,
\item $M_n(\oss)_+ \otimes M_m(\ost)_+ \subset \clC_{nm}$, for all $n, m \in \mathbb{N}$, and
\item if $\phi : \oss \rightarrow M_n$ and $\psi : \ost \rightarrow M_m$ are unital completely positive (ucp) maps, then $\phi \otimes \psi : \oss\otimes_\tau \ost \rightarrow M_{nm}$ is a ucp map.
\end{enumerate}

Recall that a unital completely positive linear (ucp) map $\phi:\oss\rightarrow\ost$ of operator systems is a \emph{complete order isomorphism} if it is a linear bijection and if both $\phi$ and $\phi^{-1}$
are completely positive. If the ucp map $\phi$ is merely injective, then $\phi$ is a \emph{complete order injection} if $\phi$ is a complete order isomorphism of between $\oss$ and the operator subsystem $\phi(\oss)$ of $\ost$.

If $\oss_1\subset\ost_1$ and $\oss_2\subset\ost_2$ are inclusions of operator systems, and if  $\iota_j:\oss_j\rightarrow\ost_j$ are the
inclusion maps, then for any operator system structures $\tau$ and $\sigma$ on $\oss_1\otimes\oss_2$ and $\ost_1\otimes\ost_2$, respectively,
the notation (as used in \cite{FP3} also)
\[
\oss_1\otimes_\tau\oss_2\,\subset_+\,\ost_1\otimes_\sigma\ost_2
\]
expresses the fact that the linear vector-space embedding $\iota_1\otimes\iota_2:\oss_1\otimes\oss_2\rightarrow\ost_1\otimes\ost_2$
is a ucp map $\oss_1\otimes_\tau\oss_2\rightarrow\ost_1\otimes_\sigma\ost_2$.
That is,  $\oss_1\otimes_\tau\oss_2\,\subset_+\,\ost_1\otimes_\sigma\ost_2$ if and only if
$M_n(\oss_1\otimes_\tau\oss_2)_+\subset M_n(\ost_1\otimes_\sigma \ost_2)_+$ for every $n\in\mathbb N$.
If, in addition, $\iota_1\otimes\iota_2$ is a complete order isomorphism onto its range, then this is denoted by
\[
\oss_1\otimes_\tau\oss_2\coisubset\ost_1\otimes_\sigma\ost_2\,.
\]
Thus,
$\oss\otimes_\tau\ost\,=\,\oss\otimes_\sigma\ost$ means
$\oss\otimes_\tau\ost\coisubset\oss\otimes_\sigma\ost$ and
$\oss\otimes_\sigma\ost\coisubset\oss\otimes_\tau\ost$.

The commuting operator system tensor product $\oc$ was introduced and studied in \cite{KP1} and will be defined below. A slight simplification in the definition is afforded by the
following lemma, which allows one to restrict to ucp maps rather than use all completely positive maps.

\begin{lemma} \label{comm}
\cite[Lemma 2.2]{CE}, \cite[Lemma 5.1.6]{ER} Let $\oss \subset \bk$ be an operator system and $\phi : \oss \rightarrow \bh$ be a completely positive
map. Then there exists a ucp map $\tilde{\phi} :\oss \rightarrow \bh$ such that
$$\phi(\cdot) = \phi(1)^{\frac{1}{2}} \tilde{\phi}(\cdot) \phi(1)^{\frac{1}{2}}.$$
\end{lemma}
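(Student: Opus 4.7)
The plan is to reduce to the invertible case via Stinespring dilation, then repair the loss of unitality with an auxiliary state. The key obstruction is that $\phi(1)$ need not be invertible; if it were, one could simply take $\tilde{\phi}(x) = \phi(1)^{-1/2}\phi(x)\phi(1)^{-1/2}$, which is patently ucp.

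First, by Arveson's extension theorem the completely positive map $\phi : \oss \to \bh$ extends to a completely positive map $\Phi : \bk \to \bh$. Applying Stinespring's dilation theorem to $\Phi$ yields a Hilbert space $\clH'$, a unital $*$-homomorphism $\pi : \bk \to \clB(\clH')$, and a bounded operator $V : \clH \to \clH'$ such that $\Phi(x) = V^*\pi(x)V$ for every $x \in \bk$. In particular $V^*V = \Phi(1) = \phi(1)$, so $|V| = \phi(1)^{1/2}$. Take the polar decomposition $V = U|V|$, where $U : \clH \to \clH'$ is a partial isometry whose initial projection $P := U^*U$ is the orthogonal projection onto $\overline{\mathrm{ran}(\phi(1)^{1/2})} = \overline{\mathrm{ran}(\phi(1))}$.

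The natural candidate $x \mapsto U^*\pi(x)U$ is completely positive, but it sends the unit to $P$ rather than to the identity on $\clH$. To promote it to a ucp map, fix any state $\omega$ on $\oss$ and set
\[
\tilde{\phi}(x) \;:=\; U^*\pi(x)U + \omega(x)(I - P), \qquad x \in \oss.
\]
Then $\tilde{\phi}$ is completely positive as a sum of cp maps, and $\tilde{\phi}(1) = P + (I - P) = I$, so it is ucp. To verify the factorisation, observe that since $P$ projects onto $\overline{\mathrm{ran}(\phi(1)^{1/2})}$, one has $(I - P)\phi(1)^{1/2} = 0$, which annihilates the perturbation when sandwiched by $\phi(1)^{1/2}$. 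The remaining term equals $|V|U^*\pi(x)U|V| = V^*\pi(x)V = \phi(x)$, as required.

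The main subtlety lies in step three: on the kernel of $\phi(1)$ one must plug in something completely positive to restore unitality, but this correction must disappear under the sandwich by $\phi(1)^{1/2}$. The orthogonality between $(I-P)\clH$ and $\overline{\mathrm{ran}(\phi(1)^{1/2})}$ is exactly what makes an arbitrary state $\omega$ acceptable, so that no limiting argument of the form $\phi(1)+\varepsilon I$ is needed.
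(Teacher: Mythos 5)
Your proof is correct: the Arveson extension plus Stinespring dilation, the polar decomposition $V=U|V|$ with $|V|=\phi(1)^{1/2}$, and the state correction $\omega(\cdot)(I-P)$ on the kernel do produce a ucp map with $\phi(1)^{1/2}\tilde{\phi}(\cdot)\phi(1)^{1/2}=\phi(\cdot)$, and every step (initial projection of $U$ equal to the range projection of $\phi(1)$, $(I-P)\phi(1)^{1/2}=0$, $|V|U^*\pi(x)U|V|=V^*\pi(x)V$) checks out. The route is genuinely different from the one the paper invokes from \cite{CE,ER}: there, $\tilde{\phi}$ is obtained without dilating, as the strong limit of $\bigl(\phi(1)+\tfrac1n\bigr)^{-1/2}\phi(\cdot)\bigl(\phi(1)+\tfrac1n\bigr)^{-1/2}+\langle\,\cdot\,\eta,\eta\rangle(1-\mathrm{P}_{\phi(1)})$. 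Your argument is limit-free and arguably cleaner, but the paper quotes the limiting formula for a reason: it makes it visible that $\tilde{\phi}$ takes values in the von Neumann algebra generated by $\phi(\oss)$ and $1$, which is exactly what is used right after the lemma to conclude that completely positive maps with commuting ranges yield ucp maps $\tilde{\phi},\tilde{\psi}$ with commuting ranges. In your construction this extra property is not apparent from the formula $U^*\pi(\cdot)U+\omega(\cdot)(I-P)$; it is still true, because $\bigl(\phi(1)+\tfrac1n\bigr)^{-1/2}|V|\to P$ strongly and hence $\bigl(\phi(1)+\tfrac1n\bigr)^{-1/2}\phi(x)\bigl(\phi(1)+\tfrac1n\bigr)^{-1/2}\to U^*\pi(x)U$ strongly (so your map coincides with the classical one), but if you intend your $\tilde{\phi}$ to be used where the paper uses the lemma, you should record that observation explicitly.
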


The proof of the lemma above describes the map $\tilde{\phi}$ as a strong limit of $\tilde{\phi}^{(n)}$ in $\bh$, where
$$\tilde{\phi}^{(n)}(s) = \left(\phi(1)+\frac{1}{n}\right)^{-\frac{1}{2}} \phi(s) \left(\phi(1)+\frac{1}{n}\right)^{-\frac{1}{2}} + \langle s \eta,\eta\rangle (1-\textrm{P}_{\phi(1)}),$$
for $\eta\in \clK$, and $\textrm{P}_{\phi(1)}$ is the projection onto the closure of the range of $\phi(1)$.
Thus, for operator systems $\oss\subset\clB(\clK_\oss)$ and $\ost \subset \clB(\clK_\ost)$,
if $\phi : \oss \rightarrow \bh$ and $\psi : \ost \rightarrow \bh$ are completely positive maps with commuting ranges, then the corresponding ucp maps $\tilde{\phi}$ and $\tilde{\psi}$ also have commuting ranges.

Denote by $\mbox{ucp}(\oss,\ost)$ the set of all pairs $(\phi, \psi)$ of ucp maps from $\oss$ and $\ost$, respectively, into $\clB(\clH)$ for some Hilbert space $\clH$, such that $\phi(\oss)$ commutes with $\psi(\ost)$. For each
$(\phi,\psi)\in \mbox{ucp}(\oss,\ost)$ let $\phi \cdot \psi : \oss\otimes \ost \rightarrow \clB(\clH)$ be the unique linear map whose value on elementary tensors is given by
$$\phi \cdot \psi( x \otimes y) = \phi(x)\psi(y).$$
Define cones by
$$\clC^{\textrm{comm}}_n =\{\eta \in M_n(\oss\otimes \ost) : (\phi \cdot \psi)^{(n)}(\eta) \geq 0, \textrm{ for all } (\phi,\psi)\in \mbox{ucp}(\oss,\ost)\}.$$
It was shown in \cite{KP1} that the collection of cones above is a matrix ordering on $\oss\otimes \ost$ with Archimedean matrix order unit $1_\oss\otimes 1_\ost$.

\begin{definition}
The operator system $(\oss\otimes \ost, \{\clC^{\textrm{comm}}_n\}_{n\in \mathbb{N}}, 1_\oss\otimes 1_\ost)$ is called the \emph{commuting operator system tensor product} of $\oss$ and $\ost$ and is denoted by $\oss\oc \ost$.
\end{definition}

The following notation, introduced in \cite{KP2}, will be used.
\begin{notation}\label{notation}
If $\mathcal X$ and $\mathcal Y$ are operator systems, then
$\mathcal X \hat{\otimes}_{\rm c}\mathcal Y$ shall denote the norm-completion of $\mathcal X \oc\mathcal Y$.  For any subspaces $\mathcal X_0\subset X$ and $\mathcal Y_0\subset \mathcal Y$,
$\mathcal X_0\overline{\otimes}\mathcal Y_0$ denotes the
closure of $\mathcal X_0{\otimes}\mathcal Y_0$ in $\mathcal X \hat{\otimes}_{\rm c}\mathcal Y$ .
\end{notation}

The symbol $\omax$ is reserved in this paper (unlike in \cite{KP1,KP2}) for the maximal C$^*$-algebra tensor product. An important fact:
if two unital C$^*$-algebras $\A$ and $\B$ are considered as operator systems, then
$\A\hat{\otimes}_{\rm c}\B=\A\omax\B$ \cite[Theorem 6.6]{KP1}.

In principle an abstract operator system $\oss$ generates many different C$^*$-algebras. The largest such C$^*$-algebra is called the universal C$^*$-algebra generated by $\oss$.
That is,
a unital C*-algebra $\clA$ is \emph{universal} for $\oss$ if:
\begin{enumerate}
\item there is a unital complete order injection $\iota_{\rm u}: \oss \rightarrow \clA$,
\item $\clA$ is generated by $\iota_{\rm u}(\oss)$, and
\item if $\phi : S \rightarrow \clB$ is a ucp map into another C*-algebra $\clB$, then there is a homomorphism $\pi : \clA \rightarrow \clB$ such that $\phi = \pi \circ \iota_{\rm u}$.
\end{enumerate}
It was shown in \cite[Proposition 8]{KrW} that every operator system has a universal C*-algebra, unique up to isomorphism, and an explicit construction was given. Therefore, $\cstaru(\oss)$ shall unambiguously denote
the universal C*-algebra generated by $\oss$.

\begin{theorem}\label{cinc} {\rm (\cite[Lemma 2.5]{KP2})} For all operator systems $\oss$ and $\ost$,
\[
\oss\oc\ost\coisubset\oss\oc\cstaru(\ost)\coisubset\cstaru(\oss)\omax\cstaru(\ost)\,.
\]
\end{theorem}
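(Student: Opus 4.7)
The plan is to establish the two coi-inclusions separately, with the second reducing to a symmetric variant of the first combined with the identification of $\hat\otimes_{\rm c}$ with $\omax$ on C*-algebras.

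For the first inclusion $\oss\oc\ost\coisubset\oss\oc\cstaru(\ost)$, I would fix $u\in M_n(\oss\otimes\ost)$ and verify that its positivity in $M_n(\oss\oc\ost)$ is equivalent to its positivity as an element of $M_n(\oss\oc\cstaru(\ost))$ via $\mathrm{id}_\oss\otimes\iota_{\rm u}$. In one direction, any $(\phi,\psi)\in \mathrm{ucp}(\oss,\cstaru(\ost))$ restricts to $(\phi,\psi\circ\iota_{\rm u})\in \mathrm{ucp}(\oss,\ost)$, and the matrix evaluations of $\phi\cdot\psi$ and $\phi\cdot(\psi\circ\iota_{\rm u})$ agree on $M_n(\oss\otimes\ost)$; hence positivity in the larger cone forces positivity in the smaller one. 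Conversely, given $(\phi,\psi_0)\in \mathrm{ucp}(\oss,\ost)$, the universal property of $\cstaru(\ost)$ produces a $*$-homomorphism $\pi:\cstaru(\ost)\to \clB(\clH)$ extending $\psi_0$; the key point is that commutativity of $\phi(\oss)$ with the self-adjoint set $\pi(\ost)$ passes to the unital $*$-algebra it generates and then, by continuity of left and right multiplication, to the norm closure $\pi(\cstaru(\ost))$. Thus $(\phi,\pi)\in \mathrm{ucp}(\oss,\cstaru(\ost))$, and positivity of $u$ in the larger cone together with the identity $(\phi\cdot\pi)^{(n)}(u)=(\phi\cdot\psi_0)^{(n)}(u)$ completes the argument.

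For the second inclusion $\oss\oc\cstaru(\ost)\coisubset\cstaru(\oss)\omax\cstaru(\ost)$, I would apply the symmetric version of the argument above, exchanging the two factors, to obtain $\oss\oc\cstaru(\ost)\coisubset\cstaru(\oss)\oc\cstaru(\ost)$: every $(\phi_0,\psi)\in \mathrm{ucp}(\oss,\cstaru(\ost))$ lifts through the universal property of $\cstaru(\oss)$ to a $*$-homomorphism $\widetilde\phi:\cstaru(\oss)\to \clB(\clH)$ extending $\phi_0$, and the same commutant-closure argument shows that $\widetilde\phi(\cstaru(\oss))$ still commutes with $\psi(\cstaru(\ost))$. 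Then the fact that $A\hat\otimes_{\rm c}B=A\omax B$ for unital C*-algebras (\cite[Theorem 6.6]{KP1}) identifies $\cstaru(\oss)\oc\cstaru(\ost)$ with $\cstaru(\oss)\omax\cstaru(\ost)$ at the level of matrix cones on the algebraic tensor product, and the second inclusion follows.

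The main obstacle is the universal-property step in the converse direction of the first inclusion: one must check that mere commutation of $\phi(\oss)$ with $\pi(\ost)$ really does extend to commutation with the whole generated C*-algebra $\pi(\cstaru(\ost))$. Once this is in hand, the rest of the argument is straightforward bookkeeping using the ucp-pair definition of the commuting tensor product.
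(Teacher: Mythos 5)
Your proof is correct and follows essentially the same route as the source the paper relies on for this statement (the paper gives no proof of its own, quoting \cite[Lemma 2.5]{KP2}): the ucp-pair description of the cones of $\oc$, the universal property of $\cstaru$ combined with the fact that commutation of $\phi(\oss)$ with the self-adjoint set $\psi_0(\ost)$ (resp.\ of $\psi(\cstaru(\ost))$ with $\phi_0(\oss)$) passes to the unital $*$-algebra it generates and hence to its norm closure, and the identification $\A\hat{\otimes}_{\rm c}\B=\A\omax\B$ of \cite[Theorem 6.6]{KP1}. The only blemish is the phrase ``positivity in the larger cone forces positivity in the smaller one'': the restriction-of-pairs argument in fact proves the forward (ucp) direction of the inclusion, while the universal-property argument proves that positivity is reflected (order injectivity); since both arguments are present and correct, the proof is complete.
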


\begin{corollary} For every unital C$^*$-algebra $\A$, operator system $\oss$, and $n\in\mathbb N$, the operator systems $M_n(\oss\oc\A)$ and $\oss\oc M_n(\A)$ are completely order isomorphic.
\end{corollary}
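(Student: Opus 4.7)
The plan is to deduce the corollary from Theorem~\ref{cinc} together with the standard C$^*$-algebraic identification $M_n(\B\omax\A)\cong\B\omax M_n(\A)$.

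Since $\A$ is already a unital C$^*$-algebra, $\cstaru(\A)=\A$ and $\cstaru(M_n(\A))=M_n(\A)$, so two applications of Theorem~\ref{cinc} yield
\[
\oss\oc\A\coisubset\cstaru(\oss)\omax\A\qquad\text{and}\qquad \oss\oc M_n(\A)\coisubset\cstaru(\oss)\omax M_n(\A).
\]
Amplifying the first of these embeddings by $n$ (a complete order injection stays a complete order injection after passing to $n\times n$ matrices) yields $M_n(\oss\oc\A)\coisubset M_n(\cstaru(\oss)\omax\A)$.

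Next I would invoke the canonical $*$-isomorphism
\[
\Psi:M_n(\cstaru(\oss)\omax\A)\longrightarrow\cstaru(\oss)\omax M_n(\A),
\]
arising from associativity of $\omax$ together with the identification $M_n\omax X\cong M_n(X)$ for any unital C$^*$-algebra $X$; being a $*$-isomorphism, $\Psi$ is in particular a complete order isomorphism. A direct check on elementary tensors gives
\[
\Psi\bigl([s_{ij}\otimes a_{ij}]\bigr)=\sum_{i,j}s_{ij}\otimes (a_{ij}E_{ij}),
\]
so $\Psi$ restricts on algebraic tensor products to the canonical linear identification of $M_n(\oss\otimes\A)$ with $\oss\otimes M_n(\A)$.

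Composing, $\Psi$ sends the image of $M_n(\oss\oc\A)$ inside $M_n(\cstaru(\oss)\omax\A)$ onto the algebraic copy of $\oss\otimes M_n(\A)$ sitting inside $\cstaru(\oss)\omax M_n(\A)$, and the second embedding from Theorem~\ref{cinc} tells us that this copy carries exactly the operator system structure of $\oss\oc M_n(\A)$. Since each stage is a complete order isomorphism onto its range, the composition delivers the desired complete order isomorphism $M_n(\oss\oc\A)\to\oss\oc M_n(\A)$. The only substantive ingredient is the well-known C$^*$-algebraic identity $M_n(\B\omax\A)\cong\B\omax M_n(\A)$; the main technical point is to check that the canonical linear identification of $M_n(\oss\otimes\A)$ with $\oss\otimes M_n(\A)$ respects both sides' commuting tensor product structures, which is exactly what is produced by observing that both are inherited from the same ambient C$^*$-algebra $\cstaru(\oss)\omax M_n(\A)$.
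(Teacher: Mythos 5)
Your overall architecture (embed both sides into the same ambient C$^*$-algebra via the canonical shuffle $M_n(\B\omax\A)\cong\B\omax M_n(\A)$ and observe that both commuting structures are the induced ones) is the natural route, and the paper itself offers no proof beyond placing the corollary after Theorem \ref{cinc}. However, your justification of the two key embeddings contains a genuine error: it is \emph{not} true that $\cstaru(\A)=\A$ or $\cstaru(M_n(\A))=M_n(\A)$ when a unital C$^*$-algebra is regarded as an operator system. The universal C$^*$-algebra $\cstaru(\oss)$ is characterised by the property that every ucp map out of $\oss$ extends to a $*$-homomorphism; since ucp maps on $\A$ need not be multiplicative, $\cstaru(\A)$ is in general a much larger algebra of which $\A$ is only a quotient (already $\cstaru(M_2)\neq M_2$). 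Consequently Theorem \ref{cinc}, as stated, gives $\oss\oc\A\coisubset\oss\oc\cstaru(\A)\coisubset\cstaru(\oss)\omax\cstaru(\A)$, not the embedding $\oss\oc\A\coisubset\cstaru(\oss)\omax\A$ that your argument requires, so the step ``two applications of Theorem \ref{cinc} yield \dots'' does not go through as written.

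The good news is that the inclusion you need is true and is exactly the fact the paper itself invokes in the proof of Lemma \ref{ocucp}: for any operator system $\oss$ and unital C$^*$-algebra $\A$ one has $\oss\oc\A\coisubset\cstaru(\oss)\omax\A$ (see \cite{KP1}; it can also be recovered from Theorem \ref{cinc} by composing with ${\rm id}\otimes q$ for the canonical quotient $q:\cstaru(\A)\rightarrow\A$ in one direction, and, in the other, by extending a ucp map $\phi$ on $\oss$ to a representation of $\cstaru(\oss)$ whose range still commutes with the given ucp image of $\A$ and using the standard correspondence between commuting pairs of cp maps and cp maps on $\omax$). Once you replace your false identification $\cstaru(\A)=\A$ by this fact, applied to $\A$ and to $M_n(\A)$, the remainder of your argument --- amplifying the first embedding, transporting it through the $*$-isomorphism $M_n(\cstaru(\oss)\omax\A)\cong\cstaru(\oss)\omax M_n(\A)$, checking it restricts to the canonical linear identification $M_n(\oss\otimes\A)\cong\oss\otimes M_n(\A)$, and concluding that both matrix orderings are induced from the same ambient C$^*$-algebra --- is correct and yields the corollary.
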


\section{Preliminary Results}

In this section we will use the fact that the matricial order on an operator system $\oss$ gives rise to a norm $\|\cdot\|_{M_n(\oss)}$ on each matrix space
$M_n(\oss)$ \cite[Chapter 3]{Pl}.

 \begin{lemma} \label{ocucp}
Let $\oss$ be an operator system and $\clA$ be a unital C*-algebra. A linear map $\phi : \oss\oc \clA \rightarrow \clB(\clH)$ is a ucp map if and only if there is a Hilbert space $\clK$,
homomorphisms $\pi : \cstaru(\oss) \rightarrow \clB(\clK)$ and $\rho : \clA \rightarrow \clB(\clK)$ with commuting ranges, and an isometry $V : \clH \rightarrow \clK$ such that $\phi (s\otimes a)=V^* \pi(s)\rho(a)V$ for all $s\in \oss$ and $a\in \clA$.
\end{lemma}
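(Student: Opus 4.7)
The proof will be a standard Arveson extension plus Stinespring dilation argument, with the commuting tensor product machinery of Theorem \ref{cinc} doing the work of connecting $\oss \oc \clA$ to the C*-algebraic world.

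For the easy direction, suppose $\pi$, $\rho$, and $V$ are given. Since $\pi$ and $\rho$ have commuting ranges, their pair induces a $*$-homomorphism $\pi \cdot \rho : \cstaru(\oss)\omax \clA \to \clB(\clK)$. By Theorem \ref{cinc}, $\oss\oc\clA \coisubset \cstaru(\oss)\omax\clA$, so restricting $\pi\cdot\rho$ to $\oss\otimes\clA$ yields a ucp map into $\clB(\clK)$. Composing with the ucp compression $T \mapsto V^*TV$ (which is ucp because $V$ is an isometry) gives precisely $\phi$, so $\phi$ is ucp.

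For the forward direction, the plan is as follows. Given a ucp map $\phi : \oss\oc\clA \to \clB(\clH)$, first use Theorem \ref{cinc} to regard $\oss\oc\clA$ as an operator subsystem of the unital C*-algebra $\cstaru(\oss)\omax\clA$. By Arveson's extension theorem (valid because $\clB(\clH)$ is injective), there is a ucp extension $\widetilde\phi : \cstaru(\oss)\omax\clA \to \clB(\clH)$. Apply Stinespring's dilation theorem to $\widetilde\phi$: there exist a Hilbert space $\clK$, a unital $*$-homomorphism $\sigma : \cstaru(\oss)\omax\clA \to \clB(\clK)$, and an isometry $V : \clH \to \clK$ with $\widetilde\phi(z) = V^* \sigma(z) V$ for all $z \in \cstaru(\oss)\omax\clA$.

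Now define $\pi : \cstaru(\oss) \to \clB(\clK)$ by $\pi(s) = \sigma(s\otimes 1)$ and $\rho : \clA \to \clB(\clK)$ by $\rho(a) = \sigma(1\otimes a)$. Both are unital $*$-homomorphisms, and because the subalgebras $\cstaru(\oss)\otimes 1$ and $1\otimes\clA$ commute inside $\cstaru(\oss)\omax\clA$, their images under $\sigma$ commute, so $\pi$ and $\rho$ have commuting ranges. Finally, for $s\in\oss$ and $a\in\clA$,
\[
\phi(s\otimes a) = \widetilde\phi(s\otimes a) = V^*\sigma(s\otimes a)V = V^*\sigma(s\otimes 1)\sigma(1\otimes a)V = V^*\pi(s)\rho(a)V,
\]
which is the desired representation.

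The only step requiring any care is verifying that the containment $\oss\oc\clA \coisubset \cstaru(\oss)\omax\clA$ is available so that Arveson's theorem applies to extend $\phi$, but this is exactly the content of Theorem \ref{cinc}; no new obstacle arises. Everything else is standard dilation theory.
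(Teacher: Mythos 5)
Your proof is correct and takes essentially the same route as the paper: identify $\oss\oc\clA$ as an operator subsystem of $\cstaru(\oss)\omax\clA$ via Theorem \ref{cinc}, extend the ucp map by Arveson, and then dilate. The only difference is that the paper delegates the dilation step to \cite[Corollary 6.5]{KP1}, whereas you carry it out explicitly with a Stinespring dilation of the extension on $\cstaru(\oss)\omax\clA$ (restricting $\sigma$ to the two commuting factors), and you also spell out the easy converse that the paper leaves implicit.
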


\begin{proof}
Because $\oss\oc\A\coisubset\cstaru(\oss)\omax\A$ by
Proposition \ref{cinc}, $\phi$ admits a ucp extension $\Phi:\cstaru(\oss)\omax\A\rightarrow \clB(\clH)$. Therefore, by \cite[Corollary 6.5]{KP1}, the restriction of $\Phi$ to $\oss\oc\A$
has the structure indicated in the statement of the lemma.
\end{proof}

\begin{lemma} \label{ultprod}
Let $\oss$ be a operator system. Let $\{\oss_i\}_{i\in \clI}$ be the set of all separable nontrivial operator subsystems of $\oss$ (that is, $\oss_i \subset \oss$). Then, there is a non-trivial ultrafilter $\clU$ on $\clI$ such that the map
$\Psi : \oss \rightarrow \prod^{\clU} \cstaru(\oss_i)$ given by $$x \longmapsto (\psi_i(x))_{\clU},$$ where $\psi_i(x)= x$ if $x\in \oss_i$ or $0$ otherwise, is a unital completely positive linear map, where $\prod^{\clU}$ denotes the C*-ultraproduct.
\end{lemma}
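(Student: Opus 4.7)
The plan is to choose the ultrafilter $\clU$ to refine the natural directed structure on $\clI$. Since $\clI$ is upward directed under inclusion (given $\oss_i,\oss_j\in\clI$, the operator subsystem of $\oss$ generated by $\oss_i\cup \oss_j$ is separable, hence equal to some $\oss_k$ with $\oss_i,\oss_j\subset \oss_k$), the tail sets
\[
F_i\;:=\;\{\,j\in\clI : \oss_i\subset \oss_j\,\}\qquad(i\in\clI)
\]
have the finite intersection property. I would extend the filter they generate to a nontrivial ultrafilter $\clU$ on $\clI$ by Zorn's lemma. The decisive property of $\clU$ is then that for every finite subset $F\subset \oss$ there exists a member of $\clU$, namely $F_{i_0}$ for any $i_0\in\clI$ with $F\subset \oss_{i_0}$, all of whose indices $i$ satisfy $F\subset \oss_i$.

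With $\clU$ in hand, I would verify the ucp properties of $\Psi$ by reducing everything to the coordinate inclusion $\iota_{\rm u}:\oss_i\hookrightarrow \cstaru(\oss_i)$, which is a complete order injection and hence completely isometric on the matricial operator-system norms. This gives the uniform bound $\|\psi_i(x)\|\leq \|x\|_{\oss}$ for each $x\in \oss$, so that $(\psi_i(x))_i$ genuinely represents an element of $\prod^{\clU}\cstaru(\oss_i)$. Linearity, unitality, and complete positivity then all follow by restricting attention to indices $i$ for which the finitely many relevant vectors lie in $\oss_i$: for $\eta=[x_{jk}]\in M_n(\oss)_+$, choose $i_0$ with all $x_{jk}\in \oss_{i_0}$; then on the $\clU$-set $F_{i_0}$ one has $\eta\in M_n(\oss_i)_+$ (using the complete order inclusions $\oss_{i_0}\subset \oss_i\subset \oss$), whence $\psi_i^{(n)}(\eta)=\iota_{\rm u}^{(n)}(\eta)\in M_n(\cstaru(\oss_i))_+$, and the class $\Psi^{(n)}(\eta)$ is positive in the C*-ultraproduct. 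Linearity is handled similarly: picking $i_0$ with $x,y\in \oss_{i_0}$, on $F_{i_0}$ the maps $\psi_i$ agree with the honest linear inclusion $\oss_{i_0}\hookrightarrow \cstaru(\oss_i)$, while unitality is immediate since $1\in \oss_i$ for every $i$.

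The main obstacle is conceptual rather than computational: one must identify the right directed family of subsystems so that the ultraproduct construction behaves predictably, and one must invoke the fact that $\iota_{\rm u}$ is a complete order injection so that matricial positivity in $\oss_i$ transfers to matricial positivity in $\cstaru(\oss_i)$. Once these two ingredients are in place, each ucp condition for $\Psi$ reduces to the standard ``eventually along $\clU$'' principle combined with the usual description of the cone in a C*-ultraproduct, so no further technicalities arise.
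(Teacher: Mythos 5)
Your argument is correct, and its core is the same as the paper's: you build the ultrafilter $\clU$ by extending the filter of tail sets $F_i=\{j:\oss_i\subset\oss_j\}$ of the directed set $\clI$ (the paper calls this a cofinal ultrafilter), and the decisive observation in both proofs is that for any finite subset of $\oss$, or finite matrix over $\oss$, the set of indices $i$ whose $\oss_i$ contains all the relevant elements lies in $\clU$. Where you diverge is in the final verification: the paper shows that $\Psi$ is a unital \emph{complete isometry}, using $\|x\|_{\cstaru(\oss_i)}=\|x\|_{\oss_i}=\|x\|_{\oss}$ and the identification $M_n\bigl(\prod^{\clU}\cstaru(\oss_i)\bigr)=\prod^{\clU}M_n(\cstaru(\oss_i))$, and then cites the standard fact (Pisier, after Remark 2.8.4) that a unital complete isometry is completely positive; you instead verify linearity, unitality, and complete positivity of $\Psi$ directly, transferring matricial positivity through the complete order inclusions $\oss_{i_0}\subset\oss_i\subset\oss$ and $\oss_i\coisubset\cstaru(\oss_i)$ and using that a bounded family which is positive on a $\clU$-set represents a positive element of the ultraproduct. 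Your route is slightly more self-contained (no appeal to the isometry-implies-ucp fact), while the paper's route yields the stronger conclusion that $\Psi$ is completely isometric, which is not needed later but is obtained at no extra cost; either argument suffices for the lemma as stated.
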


\begin{proof}
Note that the set $\clI$ is partially ordered by inclusion of the corresponding operator subsystems $\oss_i$ and that $\oss= \bigcup S_i$. Consider a cofinal ultrafilter $\clU$ on the directed set $\clI$.
The map $\Psi$ defined in the statement of the lemma is linear because of the structure of C$^*$-ultraproducts (see \cite{Had}).
To show that $\Psi$ is ucp it is sufficient to show that $\Psi$ is a complete isometry
(following the discussion after \cite[Remark 2.8.4]{Pi}).

If $x\in \oss$, note that the set $\{i\mid x\in \oss_i\} \in \clU$. To see this, simply observe that $\{i\mid x\in \oss_i\}=\{i\mid i\geq i_x\}$, where $\oss_{i_x}=\text{span}\{1,x,x^*\}$. Now, for $n=1$, $$\|\Psi(x)\|=\|(\psi_i(x))_{\clU}\|=\lim_{\clU}\|\psi_i(x)\|=\|x\|$$ by the preceding comment.

For $n>1$, we use a similar argument as follows. Let $X=(x_{kl}) \in M_n(\oss)$. Now, an ultrafilter is closed under finite intersections. So, $$I_X=\{i\mid x_{kl} \in \oss_i ~\forall~ k,l\}=\bigcap_{k,l}\{i\mid x_{kl}\in \oss_i\}$$ is in $\clU$. Finally, using the identification $M_n(\prod^{\clU} \cstaru(\oss_i))=\prod^{\clU} M_n(\cstaru(\oss_i))$ (see Remark on Pg-60 of \cite{Pi}) we obtain
\begin{eqnarray*}
\|\Psi^{(n)}(X)\|&=&\|(\Psi(x_{kl}))_{k,l}\|=\|((\psi_i(x_{kl}))_{\clU})_{k,l}\|
=\|((\psi_i(x_{kl}))_{k,l})_{\clU}\| \\ &=&\lim_{\clU}\|(\psi_i(x_{kl}))_{k,l}\|=\|(x_{kl})_{k,l}\|_{M_n(S_i), i\in I_X}=\|X\|\,,
\end{eqnarray*} thereby showing that $\Psi$ is a complete isometry.
\end{proof}

The following result is of central importance in what follows.

\begin{lemma} \label{lim}
Assume that $\clA$ is a C*-algebra and $\ost$ is an operator system, and fix $x\in \ost\otimes \clA$. If $\{\ost_i\}_{i\in \clI(x)}$
is the directed set of all separable unital operator subsystems of $\ost$ for which $x\in \ost_i\otimes \clA$, then
$$\|x\|_{\ost\oc \clA} = \lim_{\clI(x)} \|x\|_{\ost_i\oc \clA}.$$
\end{lemma}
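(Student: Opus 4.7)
The plan is to prove both inequalities between $\|x\|_{\ost\oc\clA}$ and $\lim_{\clI(x)}\|x\|_{\ost_i\oc\clA}$, with monotonicity of the net making the limit an infimum.

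For the easy direction, note that for $i \le j$ in $\clI(x)$ the inclusion $\ost_i \hookrightarrow \ost_j$ is ucp, and by the universal property of $\cstaru(\ost_i)$ it extends to a unital $*$-homomorphism $\pi_{ij} : \cstaru(\ost_i) \to \cstaru(\ost_j)$. The max-tensor functor yields a (contractive) $*$-homomorphism $\pi_{ij}\omax\mathrm{id} : \cstaru(\ost_i)\omax\cstaru(\clA) \to \cstaru(\ost_j)\omax\cstaru(\clA)$, and applying Theorem~\ref{cinc} to identify each $\oss\oc\clA$ (for $\oss=\ost_i$ or $\ost_j$) as an operator subsystem of $\cstaru(\oss)\omax\cstaru(\clA)$ delivers $\|x\|_{\ost_j\oc\clA} \le \|x\|_{\ost_i\oc\clA}$. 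The same argument applied to $\ost_i \subset \ost$ gives $\|x\|_{\ost\oc\clA} \le \|x\|_{\ost_i\oc\clA}$ for every $i \in \clI(x)$, so the net is monotone decreasing and $\|x\|_{\ost\oc\clA}\le \lim_{\clI(x)}\|x\|_{\ost_i\oc\clA}$.

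For the reverse inequality, I would apply Lemma~\ref{ultprod} to $\ost$ to obtain a ucp map $\Psi : \ost \to \prod^{\clU}\cstaru(\ost_i)$, where $\clU$ is a cofinal ultrafilter on the directed set of separable unital operator subsystems of $\ost$. Since $\clI(x)$ is itself cofinal in that directed set, $\clI(x) \in \clU$. The universal property of $\cstaru(\ost)$ lifts $\Psi$ to a unital $*$-homomorphism $\tilde{\Psi} : \cstaru(\ost) \to \prod^{\clU}\cstaru(\ost_i)$. Next, the commuting pair of $*$-homomorphisms $(b_i)_\clU \mapsto (b_i\otimes 1)_\clU$ and $c \mapsto (1\otimes c)_\clU$ from $\prod^{\clU}\cstaru(\ost_i)$ and $\cstaru(\clA)$, respectively, into $\prod^{\clU}\bigl(\cstaru(\ost_i)\omax\cstaru(\clA)\bigr)$ produces, by the universal property of $\omax$, a $*$-homomorphism
\[
G : \Bigl(\prod\nolimits^{\clU}\cstaru(\ost_i)\Bigr)\omax\cstaru(\clA) \longrightarrow \prod\nolimits^{\clU}\bigl(\cstaru(\ost_i)\omax\cstaru(\clA)\bigr),
\]
and I would set $F := G \circ (\tilde{\Psi}\omax\mathrm{id})$, a $*$-homomorphism $\cstaru(\ost)\omax\cstaru(\clA) \to \prod^{\clU}\bigl(\cstaru(\ost_i)\omax\cstaru(\clA)\bigr)$.

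For $x = \sum_k s_k \otimes a_k \in \ost\otimes\clA$, a direct chase shows $F(x) = \bigl(\sum_k \psi_i(s_k)\otimes a_k\bigr)_\clU$, which for $i \in \clI(x)$ is simply the canonical image of $x$ in $\cstaru(\ost_i)\omax\cstaru(\clA)$. Therefore
\[
\|F(x)\| \;=\; \lim\nolimits_\clU \|x\|_{\cstaru(\ost_i)\omax\cstaru(\clA)} \;=\; \lim\nolimits_{\clI(x)} \|x\|_{\ost_i\oc\clA},
\]
using Theorem~\ref{cinc} for the second equality and the fact that the convergent monotone net's ultralimit along $\clU$ agrees with its directed limit on the cofinal set $\clI(x)$. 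Since $F$ is a $*$-homomorphism, $\|F(x)\| \le \|x\|_{\cstaru(\ost)\omax\cstaru(\clA)} = \|x\|_{\ost\oc\clA}$, yielding the remaining inequality. The main obstacle is the construction of the bridging $*$-homomorphism $G$ out of commuting ultraproduct embeddings, together with the book-keeping needed to check that $F$ really evaluates to the \emph{diagonal} element $(x)_\clU$ on $\ost\otimes\clA$; once that is granted, the norm computation in the ultraproduct reduces immediately to Theorem~\ref{cinc}, Lemma~\ref{ultprod}, and standard ultraproduct norm limits.
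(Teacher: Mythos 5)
Your proof is correct, but it takes a genuinely different route from the paper in the hard direction. The paper also starts from Lemma~\ref{ultprod}, but then normalises ($\|x\|_{\ost_i\oc\clA}\ge 1$ for all $i$), chooses for each $i$ a commuting pair of representations $\pi_i,\rho_i$ of $\cstaru(\ost_i)$ and $\clA$ witnessing the norm, forms the ultraproduct of the Hilbert spaces $\clH_i$, composes with $\Psi$ and $\clA\hookrightarrow\prod^{\clU}\clA$ to get a commuting pair of ucp maps on $\ost$ and $\clA$, and finally invokes Lemma~\ref{ocucp} to convert this back into the estimate $\|x\|_{\ost\oc\clA}\ge 1$. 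You instead stay entirely at the C$^*$-algebra level: you lift $\Psi$ to $\tilde\Psi$ on $\cstaru(\ost)$, build the bridging $*$-homomorphism $G$ into the C$^*$-ultraproduct of the algebras $\cstaru(\ost_i)\omax\cstaru(\clA)$, and read off the limit of norms from the diagonal image of $x$ and contractivity of $F$; this dispenses with Lemma~\ref{ocucp} and with any choice of norm-witnessing representations, at the price of constructing $G$ and doing the diagonal book-keeping you flag. (Your easy direction is also more roundabout than the paper's, which just uses that the inclusion $\ost_1\oc\clA\subset_+\ost_2\oc\clA$ is ucp, hence contractive.) Two small imprecisions, both repairable: $\clI(x)\in\clU$ does not follow from cofinality alone (a cofinal set need not lie in a cofinal ultrafilter) but from the fact that $\clI(x)$ is upward closed and hence contains the tail above $\mathrm{span}\{1,s_k,s_k^*\}$ for a fixed representation $x=\sum_k s_k\otimes a_k$; and for the same reason the $i$-th entry of $F(x)$ equals the canonical image of $x$ only for $i$ in that tail, not for every $i\in\clI(x)$ (a different representation of $x$ may use elements outside $\ost_i$) --- since the tail is in $\clU$, the ultralimit computation is unaffected.
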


\begin{proof}
Let us denote by $\|x\|_{(\cdot)}$ the norm $\|x\|_{(\cdot) \oc \clA}$. If $x\in\ost_1 \subset \ost_2$, then
\[
\ost_1\oc\A \subset_+ \ost_2\oc\A
\quad\mbox{implies that}\quad
\|x\|_{\ost_2} \leq \|x\|_{\ost_1}\,.
\]
Thus, $\lim_\clI \|x\|_{\ost_i}$ exists, since it is a decreasing net, and $$\|x\|_\ost \leq \lim_\clI \|x\|_{\ost_i}.$$
To establish the opposite inequality, following the techniques in the proof of \cite[Proposition 3.4]{Oz},we proceed as follows.

Assume that $\|x\|_{\ost_i} \geq 1$ for all $i\in \clI$. Thus, $\|x\|_{\ost_i} = \|x\|_{\cstu(\ost_i) \omax \clA} \geq 1$.
Therefore, there exists representations $\pi_i, \rho_i$ of $\cstaru(\ost_i)$ and $\clA$ respectively, on $\clB(\clH_i)$ with commuting ranges such that $$\| \pi_i \cdot \rho_i (x)\| \geq 1.$$
Using the map $\Psi$ from Lemma \ref{ultprod} above and the injective $*$-homomorphism $\iota : \clA \hookrightarrow \prod^{\clU} \clA$, where $\clU$ is the same ultrafilter over the same index set $\clI$ as in Lemma \ref{ultprod} or above, we have ucp maps
$\phi : \ost \rightarrow \clB(\clH_\ost)$ and $\rho : \clA \rightarrow \clB(\clH_\ost)$
with commuting ranges and such that $$\|\phi \cdot \rho (x)\| \geq 1,$$ where $\clH_\ost = \prod^{\clU} \clH_i$, $\phi = (\prod^{\clU} \pi_i)\circ \Psi$ and $\rho=(\prod^{\clU} \rho_i)\circ \iota$.

Now, $\phi \cdot \rho$ is a ucp map of $\ost \oc \clA$. By Lemma \ref{ocucp}, there exist representations $\pi_0$ and $\rho_0$ of $\cstaru(\ost)$ and $\clA$ with commuting ranges and an isometry $V$ such that $$\phi \cdot \rho (x) = V^* \pi_0 \cdot \rho_0 (x)V.$$
Since $\|\phi \cdot \rho (x)\| \geq 1$, we have $\|\pi_0 \cdot \rho_0 (x)\| \geq 1$ because $V$ is an isometry. But then,
$$\|x\|_\ost = \|x\|_{\cstaru(\ost) \omax \clA} \geq 1,$$ thereby showing that $\|x\|_\ost = \lim_\clI \|x\|_{\ost_i}$.
\end{proof}

\begin{remark}\label{limrmk}
Lemma \ref{lim} is also true if $\clA$ is only an operator system, as in that case, one may simply carry out the argument  above with $\cstaru(\clA)$ and arrive at the conclusion by virtue of Proposition \ref{cinc}.
\end{remark}

\section{Main Results}

Recall that a
pair $(\oss,\ost)$ of operator systems is a relatively weakly injective pair if, for every operator system $\osr$,
\[
\oss\oc\osr\coisubset\ost\oc\osr\,.
\]
It is also convenient to say that
\emph{$\ost$ is relatively weakly injective in $\ost$} if $(\oss,\ost)$ is relatively weakly injective pair.

The first main result is an operator system version of Kirchberg's theorem \cite[Proposition 3.1]{Kr}.

\begin{theorem} \label{rwi} The following statements are equivalent for operator systems $\oss$ and $\ost$
for which $\oss\subset\ost$:
\begin{enumerate}
  \item $(\oss, \ost)$ is a relatively weakly injective pair of operator systems;
  \item $\oss \oc \cstar(\fni) \coisubset \ost \oc \cstar(\fni)$;
  \item For any ucp map $\phi :S \rightarrow \clB(\clH)$, there exist a ucp map $\Phi : \ost \rightarrow {\phi(\oss)}^{\prime\prime}$ such that $\Phi|_{\oss}=\phi$;
  \item $\left(\cstaru(\oss), \cstaru(\ost)\right)$ is a relatively weakly injective pair of C*-algebras.
  \end{enumerate}
\end{theorem}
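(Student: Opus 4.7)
The plan is to prove the cycle $(1) \Rightarrow (2) \Rightarrow (3) \Rightarrow (4) \Rightarrow (1)$. The implication $(1) \Rightarrow (2)$ is immediate from the definition, applied with $\osr = \cstar(\fni)$. For the return pass $(4) \Rightarrow (1)$, I would combine Theorem~\ref{cinc} with the functoriality of $\oc$: given any operator system $\osr$, hypothesis (4) applied to the C*-algebra $\cstaru(\osr)$ supplies the C*-algebra inclusion $\cstaru(\oss) \omax \cstaru(\osr) \subset \cstaru(\ost) \omax \cstaru(\osr)$, while Theorem~\ref{cinc} sandwiches both $\oss \oc \osr$ and $\ost \oc \osr$ inside this larger C*-algebra as complete order injections. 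The functorial ucp map $\oss \oc \osr \to \ost \oc \osr$ composes with the right-hand coi to produce the left-hand coi into $\cstaru(\ost) \omax \cstaru(\osr)$, so a diagram chase forces the intermediate map to reflect positivity at every matrix level, i.e.\ to be a complete order injection.

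The main obstacle is $(2) \Rightarrow (3)$; I would use the universality of $\cstar(\fni)$ to interpolate between $\oss$ and the commutant $\phi(\oss)'$. Fix a ucp $\phi: \oss \to \clB(\clH)$. For each separable unital C*-subalgebra $\A \subset \phi(\oss)'$ there is a surjective $*$-homomorphism $\rho_\A: \cstar(\fni) \twoheadrightarrow \A$, and since $\phi$ and $\rho_\A$ have commuting ranges they induce a ucp map $\phi \cdot \rho_\A: \oss \oc \cstar(\fni) \to \clB(\clH)$. Hypothesis (2) together with Arveson's extension theorem extends this to a ucp map $\Xi_\A: \ost \oc \cstar(\fni) \to \clB(\clH)$, which Theorem~\ref{cinc} and another application of Arveson extend further to a ucp map $\tilde\Xi_\A: \cstaru(\ost) \omax \cstar(\fni) \to \clB(\clH)$. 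Since $\tilde\Xi_\A|_{1 \otimes \cstar(\fni)} = \rho_\A$ is a $*$-homomorphism, $1 \otimes \cstar(\fni)$ lies in the multiplicative domain of $\tilde\Xi_\A$, which forces $\Phi_\A(t) := \Xi_\A(t \otimes 1)$ to commute with $\rho_\A(\cstar(\fni)) = \A$ for every $t \in \ost$. Directing the poset of such $\A$'s by inclusion and taking a cluster point of the net $\{\Phi_\A\}$ in the point-ultraweak topology (in which the set of ucp extensions of $\phi$ is compact), one obtains a ucp extension $\Phi: \ost \to \clB(\clH)$ of $\phi$ whose range commutes with every $x \in \phi(\oss)'$, since any such $x$ eventually lies in $\A$; therefore $\Phi(\ost) \subset \phi(\oss)''$, establishing (3).

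Finally, for $(3) \Rightarrow (4)$, I would take $\phi$ to be the composition of the inclusion $\oss \hookrightarrow \cstaru(\oss)$ with the canonical embedding of $\cstaru(\oss)$ into its bidual $\cstaru(\oss)^{**}$ via the universal representation, so that $\phi(\oss)'' = \cstaru(\oss)^{**}$. Hypothesis (3) produces a ucp extension $\Phi: \ost \to \cstaru(\oss)^{**}$, which the universal property of $\cstaru(\ost)$ lifts to a $*$-homomorphism $\pi: \cstaru(\ost) \to \cstaru(\oss)^{**}$. Composing with the canonical $*$-homomorphism $\pi_0: \cstaru(\oss) \to \cstaru(\ost)$ and comparing on $\oss$ shows that $\pi \circ \pi_0$ agrees with the canonical embedding $\cstaru(\oss) \hookrightarrow \cstaru(\oss)^{**}$, which forces $\pi_0$ to be injective (so $\cstaru(\oss)$ sits inside $\cstaru(\ost)$ as a C*-subalgebra) and exhibits $\pi$ as a ucp map $\cstaru(\ost) \to \cstaru(\oss)^{**}$ restricting to the canonical embedding on $\cstaru(\oss)$. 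Lance's characterization of relative weak injectivity for C*-algebras then delivers (4).
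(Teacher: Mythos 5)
Your proposal is correct, but it closes the four-way equivalence along a different cycle than the paper, and the hard work sits in a different place. The paper proves $(4)\Rightarrow(2)\Rightarrow(1)\Rightarrow(3)\Rightarrow(4)$: its difficult step is $(2)\Rightarrow(1)$, handled with operator system quotient theory (writing $\cstaru(\osr_0)$ of a separable $\osr_0$ as $\cstar(\fni)/\K$ and invoking \cite[Corollary 5.17 and Proposition 5.14]{KP2}), followed by the separable-exhaustion Lemma \ref{lim} to reach nonseparable $\osr$; the extension statement is then obtained as $(1)\Rightarrow(3)$, which is easy because (1) can be applied directly with $\osr=\phi(\oss)'$. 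You instead run $(1)\Rightarrow(2)\Rightarrow(3)\Rightarrow(4)\Rightarrow(1)$, so the difficulty migrates into $(2)\Rightarrow(3)$, which you resolve in the spirit of Kirchberg's original C$^*$-argument \cite{Kr}: exhaust $\phi(\oss)'$ by separable unital C$^*$-subalgebras $\A$, realise each as a quotient of $\cstar(\fni)$, use (2), Theorem \ref{cinc} and Arveson extension to extend $\phi\cdot\rho_{\A}$, a multiplicative-domain argument to force the values $\Phi_{\A}(t)$ to commute with $\A$, and a point-ultraweak cluster point of the net $\{\Phi_{\A}\}$ (the commutation relations survive the limit by separate ultraweak continuity of multiplication, and the subnet is eventually past any fixed $\A$); this is sound. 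Your $(4)\Rightarrow(1)$ is then a clean diagram chase through $\oss\oc\osr\coisubset\cstaru(\oss)\omax\cstaru(\osr)$, $\ost\oc\osr\coisubset\cstaru(\ost)\omax\cstaru(\osr)$ and the hypothesis applied to $\C=\cstaru(\osr)$, which avoids the quotient machinery and Lemma \ref{lim} entirely within this theorem (the paper still needs that lemma for Theorem \ref{rwiex}, so nothing is lost globally by its approach). Your $(3)\Rightarrow(4)$ is essentially the paper's, except that you re-derive the injectivity of the canonical map $\cstaru(\oss)\rightarrow\cstaru(\ost)$ rather than quoting \cite[Proposition 9]{KrW}; one small correction: the final step should be credited to Kirchberg's characterisation \cite[Proposition 3.1]{Kr} (a ucp map of the larger algebra into $\cstaru(\oss)^{**}$ fixing $\cstaru(\oss)$ gives relative weak injectivity) rather than to Lance, whose theorem concerns the absolute case of weak expectations; the paper relies on the same fact implicitly, so this is an attribution point, not a gap.
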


\begin{proof} The order of implications to be proved is $(4) \Rightarrow (2) \Rightarrow (1) \Rightarrow (3) \Rightarrow (4)$.

$(4)\Rightarrow (2)$. Assume that $X\in M_n(\oss\otimes\cstar(\fni))$ is positive in $M_n(\ost\oc\cstar(\fni))$. We need to show that
$X\in M_n(\oss\oc\cstar(\fni))_+$. Because
\[
X\in M_n(\ost\oc \cstar(\fni))_+\subset M_n(\cstaru(\ost)\omax \cstar(\fni))_+\,,
\]
hypothesis (4) implies
 $X\in M_n(\cstaru(\oss)\omax \cstar(\fni))_+$, and so $X$ is positive in $M_n(\oss\oc \cstar(\fni))$ because
$\oss\oc\cstar(\fni)\coisubset\cstaru(\oss)\oc\cstar(\fni)$.

$(2)\Rightarrow (1)$.
Let $\osr$ be an arbitrary operator system. By Theorem \ref{cinc}, $\osw\oc\osr\coisubset\osw\oc\cstaru(\osr)$ for every operator system $\osw$;
thus, if we can show that $\oss\oc\cstaru(\osr)\coisubset\ost\oc\cstaru(\osr)$, then we deduce immediately that
$\oss\oc\osr\coisubset\ost\oc\osr$.

To begin, assume that $\osr$ is separable. Hence,
there is an ideal $\K$ of $\cstar(\fni)$ such that $\cstaru(\osr)=\cstar(\fni)/\K$.
By \cite[Corollary 5.17]{KP2}, and using Notation \ref{notation},
\[
\oss\oc\cstaru(\osr)\coisubset\oss\hat{\otimes}_{\rm c}\cstaru(\osr)\,=\, \frac{ \oss\hat{\otimes}_{\rm c}\cstar(\fni)}{\oss\overline{\otimes}\K}\,.
\]
The hypothesis $\oss \oc \cstf \coisubset \ost \oc \cstf$ implies that $\oss \oc \cstf \coisubset \cstaru(\ost) \oc \cstf$,
again by Theorem \ref{cinc}.
Therefore, \cite[Proposition 5.14]{KP2} yields
\[
\frac{ \oss\hat{\otimes}_{\rm c}\cstar(\fni)}{\oss\overline{\otimes}\K} \,\coisubset\, \frac{ \cstaru(\ost) \hat{\otimes}_{\rm c}\cstar(\fni)}{\cstaru(\ost) \overline{\otimes}\K}\,=\,\cstaru(\ost)\omax\cstaru(\osr)\,.
\]
Thus, $\oss\oc\cstaru(\osr) \coisubset \cstaru(\ost)\oc\cstaru(\osr)$, which implies $\oss\oc\cstaru(\osr)\coisubset\ost\oc\cstaru(\osr)$ and, hence,
$\oss\oc\osr\coisubset\ost\oc\osr$.

Now assume that $\osr$ is an arbitrary nonseparable operator system. We have proved above that $\oss\oc\osr_0\coisubset\ost\oc\osr_0$ for every separable operator system $\osr_0$.
Fix $x\in \oss\otimes \osr$ and choose a separable operator subsystem $\osr_1 \subset \osr$ such that $x\in \oss\otimes \osr_1$. Thus,
$\oss\oc \osr_1 \subset \ost\oc \osr_1$. By the beginning of the proof of Lemma \ref{lim} we have the inequality
$$\|x\|_{\oss\oc \osr} \leq \|x\|_{\oss\oc \osr_1} = \|x\|_{\ost\oc \osr_1}.$$
This inequality above holds  for any separable operator subsystem $\osr_1 \subset \osr$ for which $x\in \oss\otimes \osr_1$.
Lemma \ref{lim} (or Remark \ref{limrmk}) thus implies $\|x\|_{\oss\oc \osr} \leq \|x\|_{\ost\oc \osr}$, which in turn implies $$\|x\|_{\oss\oc \osr}=\|x\|_{\ost\oc \osr}.$$

Next, for $n>1$, fix $X \in M_n(\oss\otimes \osr)\subset M_n(\oss\otimes \cstaru(\osr)) \cong \oss\otimes M_n(\cstaru(\osr))$. One also
has $M_n(\oss\oc \cstu(\osr)) \cong \oss\oc M_n(\cstaru(\osr))$. Now, just as in the $n=1$ case, there exists a separable operator system $\osr_n^0 \subset M_n(\cstaru(\osr))$ such that $X\in \oss \otimes \osr_n^0$
and therefore, for any separable operator system $\osr_n \subset M_n(\cstaru(\osr))$ for which $X\in \oss \otimes \osr_n$, we have the inequality
$$\|X\|_{M_n(\oss\oc \cstaru(\osr))} = \|X\|_{\oss\oc M_n(\cstaru(\osr))} \leq \|X\|_{\oss \oc \osr_n}=\|X\|_{\ost \oc \osr_n}.$$
This implies (as in case of $n=1$) that
$$\|X\|_{M_n(\oss\oc \cstaru(\osr))} \leq \|X\|_{\ost\oc M_n(\cstaru(\osr))}= \|X\|_{M_n(\ost\oc \cstaru(\osr))},$$
which in turn implies that $\|X\|_{M_n(\oss\oc \cstaru(\osr))} = \|X\|_{M_n(\ost\oc \cstaru(\osr))}$. That is, the inclusion map $\oss\otimes\osr\rightarrow\ost\otimes\osr$
is a unital complete isometry $\oss\oc\osr\rightarrow\ost\oc\osr$ and, hence, is a complete order injection.

$(1)\Rightarrow (3)$. Let $\phi :\oss \rightarrow \clB(\clH)$ be a ucp map. Since $(\oss,\ost)$ is a relatively weakly injective pair, and because the commutant $\phi(\oss)^{\prime} \subset \clB(\clH)$ of $\phi(\oss)$ is a C$^*$-algebra,
$$\oss \oc \phi(\oss)^{\prime} \coisubset \ost \oc \phi(\oss)^{\prime} \coisubset \cstaru(\ost)\omax \phi(\oss)^{\prime}.$$
By the definition of commuting tensor product, $\phi \cdot \text{id}_{\phi(\oss)^\prime}$ is a ucp map on $\oss \oc \phi(\oss)^{\prime}$ with values in $\clB(\clH)$.
Take an Arveson extension $\Psi$ of $\phi \cdot \text{id}_{\phi(\oss)^\prime}$ to $\cstaru(\ost) \omax \phi(\oss)^\prime$ and
define a ucp map $\Phi$ on $\ost$ by $$\Phi(t) = \Psi(t \otimes 1),$$for all $t\in \ost$. Obviously, $\Phi|_\oss = \phi$. Finally, to see that $\Phi$ takes values in $\phi(\oss)^{\prime\prime}$, one
invokes the usual multiplicative domain argument for completely positive maps. This concludes our claim $(1)\Rightarrow (3)$.

$(3)\Rightarrow (4)$. Since $\oss\subset \ost$, $\cstaru(\oss)$ is a unital C*-subalgebra of $\cstaru(\ost)$ \cite[Proposition 9]{KrW}.
Let $\pi_U:\cstaru(\oss) \rightarrow \clB(\clH_U)$ be the universal representation of $\cstaru(\oss)$. Then $\pi_U|_{\oss}:\oss \rightarrow \clB(\clH_U)$ is a ucp map.
By hypothesis, $\pi_U|_{\oss}$ extends to $\phi : \ost \rightarrow (\pi_U|_{\oss}(\oss))^{\prime\prime} \subset (\pi_U(\cstaru(\oss)))^{\prime\prime}$. Now, since $\cstaru(\ost)$ is generated as an algebra by $\ost$,
the unique homomorphism from $\cstaru(\ost)$ extending $\phi$ takes values in $(\pi_U(\cstaru(\oss)))^{\prime\prime}$. Further, since this homomorphism extends $\pi_U|_\oss$, it fixes $\pi_U$, which completes the proof.
\end{proof}

The second main result shows the abundant existence of pairs of relatively weakly injective operator systems and is a generalisation of \cite[Lemma 3.4]{Kr}.

\begin{theorem} \label{rwiex} If $\oss$ is a separable operator subsystem of an operator system $\ost$, then
there exists a separable operator system $\osr$ such that $\oss \coisubset \osr \coisubset \ost$ and $\osr$ is relatively weakly injective in $\ost$.
\end{theorem}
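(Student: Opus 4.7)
The plan is to construct $\osr$ as the closure of an increasing union of separable operator subsystems $\osr_0 \subset \osr_1 \subset \cdots$ of $\ost$, with $\osr_0 = \oss$, arranged so that at each stage the inclusion $\osr_n \oc \cstar(\fni) \to \ost \oc \cstar(\fni)$ nearly preserves norms on a countable dense subset. Relative weak injectivity of $\osr$ in $\ost$ will then follow from the equivalence $(1) \Leftrightarrow (2)$ of Theorem \ref{rwi}, with Lemma \ref{lim} as the main approximation tool; this is a direct operator-system analogue of Kirchberg's C*-algebraic construction.

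For the inductive step, suppose $\osr_n$ is a separable unital operator subsystem of $\ost$ containing $\oss$. Separability of $\osr_n$ and of $\cstar(\fni)$ makes $M_m(\osr_n \otimes \cstar(\fni))$ norm-separable in the $\oc$-norm at every matrix level $m$, so one can fix a countable set $D_n \subset \bigcup_m M_m(\osr_n \otimes \cstar(\fni))$ that is dense at each matrix level. For each $X \in D_n$ lying at level $m_X$, the corollary to Theorem \ref{cinc} identifies $M_{m_X}(\osr_n \oc \cstar(\fni))$ with $\osr_n \oc M_{m_X}(\cstar(\fni))$, and Lemma \ref{lim} (via Remark \ref{limrmk}) then produces a separable operator system $\osr_{n,X}$ with $\osr_n \subset \osr_{n,X} \subset \ost$ and
\[
\|X\|_{M_{m_X}(\osr_{n,X} \oc \cstar(\fni))} < \|X\|_{M_{m_X}(\ost \oc \cstar(\fni))} + 2^{-n}.
\]
Define $\osr_{n+1}$ to be the separable unital operator subsystem of $\ost$ generated by $\bigcup_{X \in D_n} \osr_{n,X}$, and set $\osr := \overline{\bigcup_n \osr_n}$.

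To verify $\osr \oc \cstar(\fni) \coisubset \ost \oc \cstar(\fni)$, first observe that the canonical inclusion descends to a unital ucp map of the commuting tensor products, since any ucp pair on $(\ost,\cstar(\fni))$ with commuting ranges restricts to such a pair on $(\osr,\cstar(\fni))$. It therefore suffices to show this map is a complete isometry, as unital complete isometries between operator systems are complete order injections. For $X \in M_m(\osr \otimes \cstar(\fni))$, only the inequality $\|X\|_{M_m(\osr \oc \cstar(\fni))} \leq \|X\|_{M_m(\ost \oc \cstar(\fni))}$ needs proof. The plan is to approximate $X$ by $X_j \in M_m(\osr_{n_j} \otimes \cstar(\fni))$ in both norms (using that each entry of $X$ is a finite sum of elementary tensors whose $\osr$-factor is a norm-limit in $\bigcup_n \osr_n$), and then to approximate each $X_j$ by $Y_j \in D_{n_j}$ in the $M_m(\osr_{n_j} \oc \cstar(\fni))$-norm, which automatically controls the error in all larger ambient systems. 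The built-in estimate at stage $n_j$, combined with a triangle inequality, then yields the bound as $j\to\infty$.

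The hard part will be coordinating the matrix levels, since Lemma \ref{lim} is stated at a single level while relative weak injectivity is a matricial-order statement. This will be handled by making $D_n$ dense at every matrix level simultaneously (countable unions of countable sets remain countable) and by using the corollary to Theorem \ref{cinc} to recast matrix norms via $\osr_n \oc M_m(\cstar(\fni))$ when invoking Lemma \ref{lim}.
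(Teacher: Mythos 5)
Your proposal is correct and takes essentially the same route as the paper: an increasing chain of separable subsystems of $\ost$ built from Lemma \ref{lim} (with the identification $M_m(\osw\oc\clA)\cong\osw\oc M_m(\clA)$ to handle matrix levels), followed by the observation that the resulting unital complete isometry $\osr\oc\cstar(\fni)\rightarrow\ost\oc\cstar(\fni)$ is a complete order injection, and an appeal to Theorem \ref{rwi}. The only difference is bookkeeping: the paper constructs layers $\mathcal X_k$ whose inclusions are exactly $k$-isometric and invokes Lemma \ref{lim} once more at the end, whereas you fold all matrix levels into a single chain with $2^{-n}$-errors on countable dense sets and finish with a triangle-inequality limit.
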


\begin{proof}
Let $\{s_k\}_{k \in \mathbb{N}}$ be a dense sequence in $\oss \oc \cstar(\fni)$. Using Lemma \ref{lim}, we choose separable operator subsystems $\oss_n$
of $\ost$ such that, $\oss \subset \oss_1 \subset \oss_2 \subset \hdots $ and $\|s_k\|_{\oss_n} \leq \|s_k\|_{\ost} + \frac{1}{n}$ for $1 \leq k \leq n$. Let $\oss^{(1)}=\overline{\bigcup \oss_i}$.
Then $\oss^{(1)}$ is a separable operator system containing $\oss$, such that, for all $x \in \oss\oc \cstar(\fni)$, one has $\|x\|_{\oss^{(1)}}=\|x\|_\ost$.
By iterating the argument above with $\oss^{(1)}$ instead of $\oss$ we obtain a sequence of separable operator systems $\oss\subset \oss^{(1)} \subset \oss^{(2)}\subset \hdots$
such that $\|\cdot\|_{\oss^{(n)}}=\|\cdot\|_{\ost}$ on $\oss^{(n-1)} \oc \cstar(\fni)$. Define $\mathcal X_1 = \overline{\bigcup \oss^{(k)}}$. Thus, $\mathcal X_1$ is a separable operator system containing $\oss$
such that $\|\cdot\|_{\mathcal X_1}=\|\cdot\|_{\ost}$.

Replacing $\mathcal X_1$ for $\oss$ and $M_2(\cstar(\fni))$ for $\cstar(\fni)$, repeat the procedure described above to
obtain a separable operator system $\mathcal X_2$ such that, for all $x\in \mathcal X_2 \oc M_2(\cstar(\fni))$, we have
$$\|x\|_{\mathcal X_2 \oc M_2(\cstar(\fni))}=\|x\|_{\ost \oc M_2(\cstar(\fni))}.$$ In other words,
using the identification $\osw \oc M_2(\cstar(\fni))= M_2(\osw \oc\cstar(\fni))$ for operator systems $\osw$, we have that
the inclusion map $\mathcal X_2 \oc \cstar(\fni) \rightarrow \ost \oc \cstar(\fni)$ is a 2-isometry.

Further iterations of the procedure above gives us $\oss\subset \mathcal X_1 \subset \mathcal X_2\subset \mathcal X_3 \subset \hdots \ost$ such that the inclusion map
$\mathcal X_k \oc \cstf \rightarrow \ost \oc \cstf$ is a $k$-isometry.

Finally, set $\osr=\bigcup \mathcal X_k$. To show that $\osr$ is relatively weakly injective in $\ost$, it is enough, by Theorem \ref{rwi}, to show that the inclusion map $\osr \oc \cstf \rightarrow \ost \oc \cstf$ is a complete isometry.

For $Y\in\osr \otimes M_{n}(\cstf)$ there exists an integer $k_Y > n$ such that $Y\in \mathcal X_k \otimes M_{n}(\cstf)$ for all $k>k_Y$.
Now recall the fact that the inclusion maps $\mathcal X_k \oc \cstf \rightarrow \ost \oc \cstf$ are $k$-isometries. As a consequence, for $n<k_Y<k$ the inclusions
$\mathcal X_k \oc \cstf \rightarrow \ost \oc \cstf$ are also $n$-isometries.
Therefore, by Lemma \ref{lim} we have
\begin{eqnarray*}
\|Y\|_{\osr \oc M_{n}(\cstf)}&=& \lim_k \|Y\|_{\mathcal X_k \oc M_{n}(\cstf)}\\ &=&\lim_{k>k_Y} \|Y\|_{\mathcal X_k \oc M_{n}(\cstf)}\\
&=&\|Y\|_{\ost \oc M_{n}(\cstf)}.
\end{eqnarray*}
This shows that $\osr$ is relatively weakly injective in $\ost$, contains $\oss$, and is separable, thereby concluding the proof.
\end{proof}

\section{Remarks on relative weak injectivity with respect to the operator system maximum tensor product}

The maximal C$^*$-tensor product has two \textit{distinct} generalizations in the $\mathcal O_1$ category, namely the commuting tensor product and the operator system maximal tensor product. See \cite{KP1,KP2} for details. This article focuses on relative weak injectivity with respect to the former. A natural question would be to seek characterisations of relatively weakly injective operator system pairs with respect to the operator system maximal tensor product. Let us denote the operator system maximal tensor product by $\otimes_{\textrm m}$.

\begin{proposition} \label{rwiopmax}
Let $\oss\coisubset\ost$. The following statements are equivalent :
\begin{enumerate}
\item For any operator system $\osr$, $\oss\otimes_{\textrm m}\osr \coisubset \ost\otimes_{\textrm m}\osr$.
\item There exists a ucp map $\Phi : \ost \rightarrow \oss^{**}$, such that $\Phi(s) = s$ for all $s\in\oss$.
\end{enumerate}
\end{proposition}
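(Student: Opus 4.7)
The plan is to prove the two implications separately; both depend on functoriality of $\otimes_{\textrm m}$ and on the role of the bidual $\oss^{**}$ as an ``injective envelope'' for operator systems with respect to $\otimes_{\textrm m}$. For $(2)\Rightarrow(1)$, starting from the ucp map $\Phi : \ost \to \oss^{**}$ extending $\mathrm{id}_\oss$, functoriality of $\otimes_{\textrm m}$ produces, for any operator system $\osr$, a ucp map
\[
\Phi\otimes\mathrm{id}_\osr : \ost\otimes_{\textrm m}\osr \longrightarrow \oss^{**}\otimes_{\textrm m}\osr.
\]
Because $\Phi|_\oss$ is the identity, this restricts on $\oss\otimes\osr$ to the canonical embedding $\oss\otimes_{\textrm m}\osr \to \oss^{**}\otimes_{\textrm m}\osr$. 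The decisive input is the fact that this canonical embedding is already a complete order injection, $\oss\otimes_{\textrm m}\osr \coisubset \oss^{**}\otimes_{\textrm m}\osr$, which is a property of operator system biduals recorded in \cite{KP2}. Chaining these maps forces the inclusion $\oss\otimes_{\textrm m}\osr \hookrightarrow \ost\otimes_{\textrm m}\osr$ to itself be a complete order injection.

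For $(1)\Rightarrow(2)$, I would realize $\oss^{**}$ concretely as a weak*-closed operator subsystem of some $\clB(\clH)$ --- for example, via a ``universal'' ucp representation of $\oss$ whose weak*-closure recovers $\oss^{**}$. The canonical embedding $j : \oss \hookrightarrow \oss^{**}\subset \clB(\clH)$ is ucp, and by Arveson's extension theorem it lifts to some ucp map $\widetilde\Phi : \ost \to \clB(\clH)$ extending $j$. The delicate step is to show that the image of $\widetilde\Phi$ lies in $\oss^{**}$. For this, I would invoke hypothesis (1) with $\osr$ chosen to encode the weak*-topology of $\oss^{**}$ (for instance, an operator system derived from a matrix state space of $\oss$), so that compatibility of the extension with the $\otimes_{\textrm m}$ structure forces its range into $\oss^{**}$, yielding the desired $\Phi$.

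The main obstacle is the direction $(1)\Rightarrow(2)$. Unlike the commuting tensor product $\oc$ appearing in Theorem \ref{rwi}, the operator system maximum tensor product $\otimes_{\textrm m}$ does \emph{not} admit a ucp characterization via pairs of ucp maps with commuting ranges, so the clean multiplicative-domain argument used in Theorem \ref{rwi}(1)$\Rightarrow$(3) cannot be transplanted. One must instead exploit the universal property of the bidual --- in particular, the correspondence between weak*-continuous ucp maps into $\oss^{**}$ and matrix-positive functionals on $\oss$ --- to constrain the range of the extension. It is at this step that the proof will demand the most care.
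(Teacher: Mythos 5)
Your $(2)\Rightarrow(1)$ argument is essentially the paper's own proof: apply the ucp map $\Phi\otimes\mathrm{id}_\osr$ (functoriality of $\otimes_{\textrm m}$) to an element of $M_n(\oss\otimes\osr)$ that is positive in $M_n(\ost\otimes_{\textrm m}\osr)$, note that it is fixed because $\Phi|_\oss=\mathrm{id}$, and conclude via the complete order inclusion $\oss\otimes_{\textrm m}\osr\coisubset\oss^{**}\otimes_{\textrm m}\osr$, which is exactly \cite[Lemma 6.5]{KP2}. That half is fine.

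The direction $(1)\Rightarrow(2)$, however, has a genuine gap. Your plan is to take an Arveson extension $\widetilde\Phi:\ost\rightarrow\clB(\clH)$ of the embedding $\oss\hookrightarrow\oss^{**}\subset\clB(\clH)$ \emph{first}, and only afterwards ``invoke hypothesis (1) with $\osr$ chosen to encode the weak$^*$-topology of $\oss^{**}$'' to force the range into $\oss^{**}$. This cannot work as stated: an Arveson extension chosen without reference to (1) will in general not map into $\oss^{**}$, and hypothesis (1) cannot retroactively correct a fixed map; moreover you never specify the operator system $\osr$ nor the mechanism by which ``compatibility with the $\otimes_{\textrm m}$ structure'' would constrain the range. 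The hypothesis has to be used in the \emph{construction} of the map, not as an afterthought. The paper's route is different and concrete: pass to the bidual inclusions $\oss^{**}\coisubset\ost^{**}\coisubset\bh$, where the second inclusion is weak$^*$-WOT homeomorphic, and then repeat the proof of \cite[Theorem 4.1, (iii)$\Rightarrow$(iv)]{Han} verbatim; it is Han's argument (built around the $\otimes_{\textrm m}$ hypothesis, duality for positive functionals on maximal tensor products, and a weak$^*$-limit argument) that actually produces a ucp map $\ost\rightarrow\oss^{**}$ fixing $\oss$. Your closing observation that the multiplicative-domain argument of Theorem \ref{rwi}(1)$\Rightarrow$(3) is unavailable for $\otimes_{\textrm m}$ is correct and is precisely why the bidual, rather than a double commutant, appears in statement (2); but identifying the obstacle is not the same as overcoming it, and as written your proof of $(1)\Rightarrow(2)$ is a plan rather than an argument.
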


\begin{proof}
$(1)\Rightarrow (2)$. Consider the bidual inclusion $\oss^{**}\coisubset\ost^{**}\coisubset\bh$, where the second inclusion is weak$^*$-WOT homeomorphic exactly as in the proof of \cite[Theorem 4.1]{Han}. Repeating the proof of \cite[Theorem 4.1 (iii)$\Rightarrow$(iv)]{Han} verbatim gives the required result.

$(2)\Rightarrow (1)$. For $X\in M_n(\ost\otimes_{\textrm m}\osr)^+ \cap M_n(\oss\otimes\osr)$, one has $X=(\Phi\otimes\textrm{id})^{(n)}(X)\in M_n(\oss\otimes_{\textrm m}\osr)\coisubset M_n(\oss^{**}\otimes_{\textrm m}\osr)$, where the last inclusion is due to \cite[Lemma 6.5]{KP2}.
\end{proof}

\begin{remark}
Comparing Proposition \ref{rwiopmax} and Theorem \ref{rwi}, it is unlikely that a universal characterisation of the likes of Theorem \ref{rwi}(2) exists in the $\otimes_{\textrm m}$ case. As a consequence, it cannot be ascertained that an existence result similar to Theorem \ref{rwiex} holds for the maximal operator system tensor product.
\end{remark}

\section{Examples}

\subsection{Operator systems generated by free unitaries}
Denote the generators of the free group $\fni$ by $\{u_j\}_{j\in\mathbb N}$. In $\cstar(\fni)$, each $u_j$ is a unitary and so, for each $n\in\mathbb N$, define
$$\mathbb{S}_n = \textrm{span}\{u_{-n},\hdots,u_{-1}, 1, u_1,\hdots, u_n\},$$
which is an operator subsystem of $\cstar(\fn)$.

\begin{example} \emph{For $n\in\mathbb{N}$, the pair $(\mathbb{S}_n,\cstar(\fn))$ is a relatively weakly injective pair of operator systems.}

The proof of this assertion is adapted from the proof of \cite[Lemma 4.1]{FP2} and makes use of our main result, Theorem \ref{rwi}.
 Let $\phi : \mathbb{S}_n \rightarrow \bh$ be a ucp map. By Theorem \ref{rwi}, it is enough to show that $\phi$ extends to $\cstar(\fn)$, taking values in $\phi (\mathbb{S}_n)^{\prime\prime}$.
 For each contraction $\phi(u_i)$, $1\leq i\leq n$, consider its Halmos unitary dilation $W_i$ on $\clH \oplus \clH$ given by
$$W_i = \left[
          \begin{array}{cc}
            \phi(u_i) & (1-\phi(u_i)\phi(u_{-i}))^{\frac{1}{2}} \\
            (1-\phi(u_{-i})\phi(u_i))^{\frac{1}{2}} & -\phi(u_{-i}) \\
          \end{array}
        \right]
$$
Let $T \in \phi (\mathbb{S}_n)^\prime$ and consider the operator $\tilde{T} = \left[
                                                                 \begin{array}{cc}
                                                                   T & 0 \\
                                                                   0 & T \\
                                                                 \end{array}
                                                               \right] \in \clB(\clH \oplus \clH)
$. Now, by functional calculus, $\tilde{T}$ commutes with $W_i$ for all $1\leq i\leq n$. Since $u_1,\dots,u_n$ are universal unitaries in $\cstar(\fn)$,
there is a unique homomorphism $\pi:\cstar(\fn)\rightarrow \clB(\clH \oplus \clH)$, such that $\pi(u_i) = W_i$ for $1\leq i\leq n$. Let $P=\left[
                                     \begin{array}{cc}
                                       I & 0 \\
                                       0 & 0 \\
                                     \end{array}
                                   \right]
$. Define ucp map $\tilde{\phi} : \cstar(\fn) \rightarrow \bh$ by $\tilde{\phi}(\cdot) = P\pi(\cdot)|_{\clH}$. Note that, $\tilde{\phi}$ extends $\phi$ and $\tilde{T}$ commutes with $P$.
Since, $\tilde{T}$ commutes with every $W_i$, it commutes with $\pi(\cstar(\fn))$. Thus, for $x\in \cstar(\fn)$ we have
$$\tilde{\phi}(x)T = P\pi(x)P \tilde{T}P = P\pi(x)\tilde{T}P= P\tilde{T}\pi(x)P = P\tilde{T} P\pi(x)P= T \tilde{\phi}(x).$$
So, $\tilde{\phi}(x) \in \phi (\mathbb{S}_n)^{\prime\prime}$ as $T$ was chosen arbitrarily in $\phi (\mathbb{S}_n)^\prime$. This concludes our claim.
\end{example}

\subsection{Operator systems generated from universal relations}
Let
$$\mathcal G=\{h_1,\dots,h_n\} \textrm{ and }
\mathcal R=\{h_j^*=h_j,\;\|h_j\|\leq 1,\;1\leq j\leq n\}$$
be a set of relations in the set $\mathcal G$, and let $\cstar(\mathcal G\vert\mathcal R)$ denote the universal unital C$^*$-algebra generated by $\mathcal G$ subject to $\mathcal R$.
The operator system
\[
NC(n)\,=\,\mbox{\rm span} \{ 1, h_1,...,h_n \}\,\subset\, \cstar(\mathcal G\vert\mathcal R)\,.
\]
is called the operator system of the non-commuting $n$-cube.

It was shown in \cite{FP3} that the C$^*$-envelope of $NC(n)$ is $\cstar(*_n\mathbb Z_2)$, where $*_n\mathbb Z_2$ is the free product of $n$-copies of $\mathbb Z_2$. The following example is from \cite[Lemma 6.2]{FP3} and can be proved exactly along the lines of the previous example.

\begin{example}\label{nc} \emph{For $n\in\mathbb{N}$, the pair $(NC(n),\cstar(*_n\mathbb Z_2))$ is a relatively weakly injective pair of operator systems.}
\end{example}

\subsection{Inclusion in the double dual}
The dual $\oss^{*}$ of an operator system is a matricially normed space, but the double dual $\oss^{**}$ is an operator system containing $\oss$ as an operator subsytem \cite{KP2}.
The following example is established in \cite[Corollary 6.6]{KP2}.

\begin{example}\label{dual} \emph{$(\oss,\oss^{**})$ is a relatively weakly injective pair of operator systems, for every operator system $\oss$.}
\end{example}

\subsection{Operator systems with DCEP}

An operator system $\oss$ is said to have the \emph{double commutant expectation property} (DCEP) if, for every complete order embedding $\oss\rightarrow \bh$, there exists a
completely positive linear map $\Phi : \bh \rightarrow\oss^{\prime\prime}\subset \bh$, fixing $\oss$.

\begin{example} \emph{ If $\oss$ has the double commutant expectation property, then $(\oss,\ost)$ is a relatively weakly injective pair of operator systems, for every operator system $\ost$ that contains $\oss$ as an operator subsystem.}

This assertion above is a consequence of \cite[Theorem 7.3, Theorem 7.1]{KP2}, which states that if $\oss\subset\ost$ and $\oss$ has the double commutant expectation property, then
$\oss\oc\osr\coisubset\ost\oc\osr$ for every operator system $\osr$.

\end{example}

\section{Acknowledgement}

The author would like to thank the referee for his/her valuable suggestions and his doctoral thesis advisor, Douglas Farenick, for suggesting the topic of this paper
and for many helpful discussions during the course of this work. The author's work at the University of Regina is supported in part by a Saskatchewan Innovation and Opportunity Scholarship
and a Faculty of Graduate Studies \& Research Dean's Scholarship.


\end{document}